\documentclass{amsart}
\usepackage{a4wide, graphicx, amsmath, amsfonts, amsthm, amssymb, latexsym}   

\usepackage[colorlinks]{hyperref}
\hypersetup{linkcolor=blue, urlcolor=blue, citecolor=red}

\setlength{\marginparwidth}{1in}
\let\oldmarginpar\marginpar
\renewcommand\marginpar[1]{\-\oldmarginpar[\raggedleft\footnotesize #1]%
{\raggedright\footnotesize #1}}

\newtheorem{thm}{Theorem}[section]
\newtheorem{lem}[thm]{Lemma}

\newtheorem{cor}[thm]{Corollary}

\newcommand{\proofref}[1]{\noindent {\emph{Proof of Theorem} \ref{#1}.\ }}

\newcommand{\set}[2]{\{#1:#2\}}
\newcommand{\genset}[1]{\langle#1\rangle}

\newcommand{\nat}{\mathbb{N}}

\newcommand{\trans}{\Omega^{\Omega}}
\newcommand{\sym}{\operatorname{Sym}(\Omega)}

\newcommand{\inv}{\operatorname{I}_{\Omega}}


\begin{document}

\title{Sierpi\'nski rank of the symmetric inverse semigroup}

\author{J.T. Hyde and Y. P\'eresse}
\begin{abstract}
We show that every countable set of partial bijections from an infinite set to itself can be obtained as a composition of just two such partial bijections. This strengthens a result by Higgins, Howie, Mitchell and Ru\v{s}kuc stating that every such countable set of partial bijections may be obtained as the composition of two partial bijections and their inverses.
\end{abstract}

\maketitle


\section{Introduction} 
Cayley's Theorem states that every group is isomorphic to a subgroup of the symmetric group $\sym$ of all permutations of some set $\Omega$. In fact, any group $G$ embeds in $\operatorname{Sym}(G)$. 

In this sense, the semigroup-theoretic analogue of $\sym$ is $\trans$, the semigroup of all functions from $\Omega$ to $\Omega$. Every semigroup $S$ is isomorphic to a subsemigroup of $\trans$ for some set $\Omega$ with $|\Omega| \leq |S|+1$.

For inverse semigroups, the corresponding object is  the inverse semigroup $\inv$ of all partial bijections on $\Omega$, i.e. bijections with range and domain a subset of $\Omega$. Every inverse semigroup $S$ embeds into $\inv$ for some set $\Omega$ with $|\Omega|\leq |S|+1$.

The following theorem is a classical result by Sierpi\'nski. 

\begin{thm}[\cite{sierpinski}, Th\'eor\`eme I] \label{transtheorem} Let $\Omega$ be an infinite set. Then every countable subset of $\trans$ is contained in a $2$-generated subsemigroup of $\trans$.
\end{thm}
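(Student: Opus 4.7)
\emph{Proof plan.} The plan is to construct two functions $a, b \in \trans$ and, for each member $F_n$ of the given countable family (enumerate it as $\{F_1, F_2, \ldots\}$), an explicit word $w_n$ over the alphabet $\{a, b\}$ such that $w_n(a, b) = F_n$; equivalently, the family is contained in $\genset{a, b}$.

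First, since $\Omega$ is infinite, I would fix a decomposition $\Omega = \bigsqcup_{k \in \nat} \Omega_k$ into countably many pairwise disjoint copies of $\Omega$, together with chosen bijections $\phi_k : \Omega \to \Omega_k$. This abundance of disjoint copies of $\Omega$ inside itself is the essential combinatorial resource on which the construction rests. I then define $a \in \trans$ as the natural shift $a(\phi_k(x)) = \phi_{k+1}(x)$; this $a$ is a total injection (not surjective, since it misses $\Omega_0$), and its iterates satisfy $a^n(\phi_k(x)) = \phi_{k+n}(x)$, so that $a$ can be used to route any input deep into the partition.

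Next, I would design $b \in \trans$ so as to encode the entire sequence $F_1, F_2, \ldots$ at once. A natural first attempt is to set $b(\phi_k(x)) = \phi_0(F_k(x))$ for $k \geq 1$, with $b|_{\Omega_0}$ chosen suitably; with this choice, a direct computation shows that $b\,a^n$ agrees with $F_n$ on the distinguished copy $\Omega_0$, under the identification $\Omega \cong \Omega_0$ via $\phi_0$.

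The crux of the argument, and what I expect to be the main obstacle, is to upgrade this ``local'' agreement on $\Omega_0$ to a ``global'' one: one needs a word $w_n(a, b)$ that equals $F_n$ as a function on \emph{all} of $\Omega$, not merely on a single copy. Since each $F_n$ is an arbitrary element of $\trans$, its action on the remaining copies $\Omega_k$ ($k \geq 1$) is not governed by the simple formula built into $b$, so closing this gap demands a careful joint refinement of $b$ and of the family of words $\{w_n\}$. One should tune $b$ on each $\Omega_k$ and lengthen the words $w_n$ so that every pointwise constraint $w_n(a, b)(x) = F_n(x)$ is simultaneously met; an alternative is a back-and-forth construction that builds $a$ and $b$ in stages, each stage extending the partial definitions to satisfy one further constraint of this form. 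In both variants, the infinitude of $\Omega$ supplies enough room in the partition $\{\Omega_k : k \in \nat\}$ to accommodate the required bookkeeping, which is the heart of Sierpi\'nski's original argument.
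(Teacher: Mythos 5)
There is a genuine gap, and you have correctly located it yourself: the passage from agreement on the single block $\Omega_0$ to equality on all of $\Omega$ is the entire content of Sierpi\'nski's theorem, and your proposal stops exactly there. (The paper does not prove Theorem \ref{transtheorem} -- it is quoted from Sierpi\'nski -- but its proof of Theorem \ref{main} runs the same kind of argument for $\inv$, so the comparison below is with that.) The missing idea is an \emph{encoding} step: one must arrange that some fixed word in $a$ and $b$ is an injection (in fact a bijection) from all of $\Omega$ onto a single block, and then use that word as a \emph{prefix} of every $w_n$. Concretely, if $\psi$ is a word in $a,b$ restricting to a bijection $\Omega\to\Omega_0$, then $\psi\, a^n\, b$ first funnels every point of $\Omega$ into $\Omega_0$, so the ``local'' computation there is automatically global; one then defines $b$ on $\Omega_n$ to output $F_n(\psi'^{-1}(y))$ directly in $\Omega$ (where $\psi'$ is the induced bijection $\Omega \to \Omega_0$), rather than $\phi_0(F_n(x))$ -- note that your $b$ as written lands in $\Omega_0$, so even on $\Omega_0$ your word equals $F_n$ only up to conjugation by $\phi_0$, not on the nose. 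Realising such a $\psi$ as a word in the two generators while keeping it consistent with the role $b$ already plays on the blocks $\Omega_n$ is the index-juggling that occupies most of any correct proof; in the paper's proof of Theorem \ref{main} this is exactly the role of the maps $\pi=f^{26}g$ and $\tau$, which are bijections from $\Omega$ onto $\Omega_0$ built from the generators, with high powers of the shift used to keep the encoding levels disjoint from the storage levels.

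Your two suggested repairs do not close this gap as stated. ``Tuning $b$ on each $\Omega_k$ and lengthening the words'' founders because $b$ is a single function: the value of $b$ at a point of $\Omega_k$ is consulted by the words $w_n$ for every $n$ simultaneously, so the pointwise constraints for different $n$ are coupled and cannot be met one block at a time without the decoupling that the encoder provides. The ``back-and-forth in stages'' alternative is not developed at all: each constraint $w_n(a,b)=F_n$ is a functional equation at $|\Omega|$-many points (possibly uncountably many), you have not said how the words $w_n$ are chosen in advance of the construction, and you give no mechanism preventing two stages from imposing conflicting values of $b$ at the same point. With the encoder idea added, your setup (shift $a$, storage map $b$, words $\psi\,a^n\,b$) does become Sierpi\'nski's proof; without it, the argument proves only that each $F_n$ is \emph{simulated} on a copy of $\Omega$ inside $\Omega$, which is strictly weaker than membership of $F_n$ in $\genset{a,b}$.
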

Because of the property of $\trans$ mentioned above, Theorem \ref{transtheorem} immediately implies that every countable semigroup embeds in a $2$-generated semigroup. 
In light of Theorem \ref{transtheorem}, the \emph{Sierpi\'nski rank} of a semigroup $S$ is defined to be the least number $n$ such that every countable subset of $S$ is contained in an $n$-generated subsemigroup of $S$. If no such $n$ exists, $S$ is said to have infinite Sierpi\'nksi rank. Note that for countable semigroups, the Sierpi\'nksi rank is just the usual rank of a semigroup, i.e. the least size of a generating set.
It was shown in \cite[Lemma 2.2]{surjections_paper} that the only semigroups of Sierpi\'nski rank $1$ are $1$-generated semigroups. So Theorem \ref{transtheorem} says that $\trans$ has Sierpi\'nski rank $2$.

Sierpi\'nski ranks of various uncountable semigroups have been calculated; see the introduction of \cite{surjections_paper} for a recent survey. The following analogues of Theorem \ref{transtheorem} for groups and inverse semigroups were proved by Galvin and Higgins, Howie, Mitchell and Ru\v{s}kuc, respectively.

\begin{thm}[\cite{galvin}, Theorem 3.3]\label{symtheorem} Let $\Omega$ be an infinite set. Then every countable subset of $\sym$ is contained in a $2$-generated subgroup of $\sym$.
\end{thm}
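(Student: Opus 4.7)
The plan is to mimic Sierpi\'nski's proof of Theorem \ref{transtheorem}. Enumerate the countable subset as $\{f_n : n \in \nat\} \subseteq \sym$; I aim to produce two permutations $a, b \in \sym$ and, for each $n$, a word $w_n$ in $a, b, a^{-1}, b^{-1}$ with $w_n(a,b) = f_n$ in $\sym$.

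\emph{Setup and shift.} Since $\Omega$ is infinite, $|\Omega \times \integers| = |\Omega|$, so I fix a partition $\Omega = \bigsqcup_{k \in \integers} \Omega_k$ into blocks of cardinality $|\Omega|$, together with bijections $\phi_k \colon \Omega_0 \to \Omega_k$ (with $\phi_0$ the identity). I define the ``block shift'' $a \in \sym$ by $a(\phi_k(x)) = \phi_{k+1}(x)$, so every $a$-orbit is a copy of $\integers$.

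\emph{Telescoping encoding.} I define $b$ blockwise by $b|_{\Omega_k} = \phi_k g_k \phi_k^{-1}$, where each $g_k$ is a permutation of $\Omega_0$ chosen so that suitable products of conjugates $a^{-m} b a^m$ reproduce every $f_n$. A natural choice is $g_k = f_k f_{k-1}^{-1}$ for $k \geq 1$ (with $f_0$ the identity) and $g_k$ the identity otherwise; one then computes that $a^{-m} b a^m$ acts on $\Omega_k$ as the transported copy of $g_{k+m}$, so the word $w_n := a^{-n}(ba)^n$ acts on the reference block $\Omega_0$ by the telescoping product $g_n g_{n-1} \cdots g_1 = f_n$.

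\emph{Main obstacle.} The central difficulty is that the word $w_n$ just produced equals $f_n$ only on the reference block $\Omega_0$; on the other blocks it acts as a different telescoping product (for instance, as $f_{k+n} f_k^{-1}$ on $\Omega_k$ when $k \geq 1$). Since the original $f_n$ is not in general supported on $\Omega_0$, and since no single preconjugation by an element of $\sym$ can force all of $\bigcup_n \supp f_n$ into the proper subset $\Omega_0$ (such a conjugator would have to map $\Omega$ bijectively into a proper subset of itself), this naive shift-and-encode is insufficient. Overcoming it is the technical heart of the theorem: one must design $b$ — and possibly auxiliary action on the ``extra'' blocks — so that a more elaborate word in $a, b$ synchronizes the action of $f_n$ on $\Omega_0$ with the prescribed action on every other block, and this has to be arranged simultaneously for all $n$ with a single choice of $b$. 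This synchronized copying is, I expect, the hardest step, and it is precisely what distinguishes the permutation-group case from the transformation-semigroup analogue in Theorem \ref{transtheorem}.
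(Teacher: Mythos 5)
This statement is quoted from Galvin's paper and is not proved in the present article, so there is no in-paper argument to compare against; judged on its own terms, your proposal is not a proof. The telescoping setup is fine as far as it goes: with $a$ the block shift and $b$ acting on $\Omega_k$ as a transported copy of $g_k=f_kf_{k-1}^{-1}$, the word $a^{-n}(ba)^n$ does reproduce $f_n$ on the reference block $\Omega_0$. But, as you yourself say in the final paragraph, this word acts as unwanted telescoping products $f_{k+n}f_k^{-1}$ on every other block, and you offer no mechanism for cancelling or avoiding that garbage. That cancellation is not a technical detail to be deferred; it is the entire content of the theorem. In Sierpi\'nski's semigroup setting one escapes by composing with a non-injective map that collapses $\Omega$ into $\Omega_0$ before decoding and expands afterwards, which is exactly what is forbidden in $\sym$; so the point where your argument stops is the point where the group case genuinely diverges from Theorem \ref{transtheorem}, and nothing in your sketch addresses it.

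Concretely, what is missing is a scheme in which the words evaluated at $(a,b)$ are \emph{globally} equal to the $f_n$, not merely equal on one block. Galvin's actual argument achieves this with a different and more delicate construction (built around expressing permutations as products of two involutions and as suitably chosen products of conjugates of a single permutation, arranged so that the contributions outside the target region cancel identically); none of that machinery, or any substitute for it, appears in your proposal. A smaller inaccuracy: your parenthetical claim that no conjugation can move all the supports into $\Omega_0$ is only correct because some $f_n$ may have support all of $\Omega$; when $\bigl|\Omega\setminus\bigcup_n\supp f_n\bigr|=|\Omega|$ a conjugation would in fact do this. But the main verdict is unaffected: the proposal identifies the hard step and then stops, so it does not establish the theorem.
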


\begin{thm}[\cite{versus}, Proposition 4.2]\label{invtheorem} Let $\Omega$ be an infinite set. Then every countable subset of $\inv$ is contained in a $2$-generated inverse subsemigroup of $\inv$. \end{thm}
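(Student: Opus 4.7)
The plan is to follow a Sierpi\'nski-style construction adapted to the inverse-semigroup setting. Given a countable family $f_1, f_2, \dots \in \inv$, the goal is to produce two partial bijections $a, b \in \inv$ and to exhibit each $f_n$ as a word in $a, b, a^{-1}, b^{-1}$, so that $\{f_n\}$ lies in the inverse subsemigroup $\langle a, b \rangle$ of $\inv$.

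Because $\Omega$ is infinite, I would first partition it into countably many disjoint copies of itself, $\Omega = \bigsqcup_{n \geq 0} \Omega_n$, with bijections $\phi_n : \Omega \to \Omega_n$. Relative to this partition I would take $a$ to be the ``shift'' defined by $a|_{\Omega_n} = \phi_{n+1} \phi_n^{-1}$, so that positive powers of $a$ move blocks to higher indices and $a^{-1}$ moves them back; and I would take $b$ to be the ``encoder'' defined by $b|_{\Omega_n} = \phi_n f_n \phi_n^{-1}$ for $n \geq 1$, placing a transported copy of each $f_n$ on the $n$-th block. A short computation then gives that $w_n := a^{-n} b a^n$, restricted to $\Omega_0$, agrees with the copy $\phi_0 f_n \phi_0^{-1}$ of $f_n$ on $\Omega_0$.

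Two further things are needed: (i) isolate the action of $w_n$ on $\Omega_0$, since on $\Omega_m$ (for $m \geq 1$) the word $w_n$ simultaneously acts as a copy of $f_{n+m}$; and (ii) translate this $\Omega_0$-copy back to $f_n$ itself, viewed as an element of $\inv$ acting on all of $\Omega$. Both steps amount to producing, inside $\langle a, b \rangle$, a bijection $\Omega \to \Omega_0$ (and, equivalently, the idempotent $\mathrm{id}_{\Omega_0}$), which can then be used to restrict $w_n$ to $\Omega_0$ and to carry the result back across the identification.

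I expect the main obstacle to lie precisely in this last step. In an inverse semigroup one can form only idempotents of the form $v v^{-1}$ and $v^{-1} v$, i.e.\ restrictions to the range or domain of a generated element, so there is no ``complement'' or ``difference'' operation on idempotents; in particular the easy $\mathrm{id}_{\Omega \setminus \Omega_0} = a a^{-1}$ does not obviously yield $\mathrm{id}_{\Omega_0}$. The fix will be to refine the setup---for instance by reserving an extra auxiliary block in the decomposition that is not overwritten by the encoder $b$, or by redefining $a$ on a distinguished part of $\Omega$---so that a suitable composition of $a$, $b$, and their inverses realizes both the needed bijection $\Omega \to \Omega_0$ and the idempotent on $\Omega_0$. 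Once such a transfer element is built into the construction, verifying that each $f_n$ belongs to $\langle a, b \rangle$ reduces to a routine computation with the defining formulas for $a$ and $b$.
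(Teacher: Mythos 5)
Your architecture --- a shift $a$ carrying $\Omega_0$ up through the blocks and an encoder $b$ acting on $\Omega_n$ as a transported copy of $f_n$, so that $a^{-n}ba^n$ restricted to $\Omega_0$ is the copy $\phi_0 f_n\phi_0^{-1}$ --- is the standard Sierpi\'nski skeleton, and you have correctly located the one step that carries all the weight: producing, as a word in $a^{\pm1},b^{\pm1}$, a bijection from $\Omega$ onto $\Omega_0$ (conjugating by which both isolates the $\Omega_0$-part of $a^{-n}ba^n$ and carries it back to $f_n$ itself). But the proposal stops exactly there, asserting that ``reserving an extra auxiliary block'' will make the rest ``a routine computation.'' This is not a routine refinement; it is the actual content of the proof. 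The idempotents reachable from the shift alone are $a^{-n}a^n=\mathrm{id}_{\bigcup_{j\geq n}\Omega_j}$, i.e.\ identities on upward-closed unions of blocks only, and since the idempotents of an inverse semigroup are closed under meets but admit no complementation, no word in $a^{\pm 1}$ yields $\mathrm{id}_{\Omega_0}$, as you note. To manufacture a bijection $\Omega\to\Omega_0$ you must build a genuine collapsing piece into a generator (a block or a tail of blocks mapped onto $\Omega_0$, or an expanding piece whose inverse collapses a tail onto a single block), and the obstruction is twofold: such a piece must have range disjoint from the ranges of all the encoding pieces $b|_{\Omega_n}$ (which land back inside the $\Omega_n$), or else $b$ fails to be injective and is not an element of $\inv$ at all; and powers of $a$ only translate the block index by a constant, so they cannot reshape $\Omega$ onto a scattered union of blocks to feed into such a piece. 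Reconciling these constraints forces a careful re-allocation of blocks between ``encoding'' and ``plumbing,'' which your proposal does not attempt.

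For calibration: the paper does not reprove this statement (it is quoted from \cite{versus}), but its proof of the stronger Theorem \ref{main} shows exactly what the missing step costs. There only finitely many maps need encoding (after a reduction via Lemma \ref{relative} and Lemma \ref{involutions}), yet the blocks $\Omega_{23},\Omega_{24},\Omega_{25}$ and the tail $\bigcup_{n\geq26}\Omega_n$ must still be given special expanding and collapsing images under $g$, chosen so that the explicit words $\pi=f^{26}g$ and $\tau=g^{-2}f^{-12}g^{-1}f^{-25}$ are bijections from $\Omega$ onto $\Omega_0$; and a further non-obvious argument is required to check that $\pi$ and $\tau$ do not depend on the encoded data, so that using them to define the encoding is not circular. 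Your construction needs an analogous concrete mechanism (the inverse-semigroup setting lets you use $a^{-1},b^{-1}$ freely, which spares you the paper's $j_9$-trick for inverting $\pi\tau^{-1}$, but it does not remove the need for the collapsing pieces, the disjointness bookkeeping, or the non-circularity check). Without it the proof is incomplete at its central point.
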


It follows from Theorem \ref{symtheorem} (\ref{invtheorem}) that every countable group (inverse semigroup) embeds in a $2$-generated group (inverse semigroup).

The definition of Sierpi\'nski rank for semigroups extends naturally to general algebras: an algebra $A$ has Sierpi\'nski rank $n$ if every countable subset of $A$ is contained in an $n$-generated subalgebra of $A$. It is easy to see that groups and inverse semigroups of Sierpi\'nski rank $1$ are commutative. So one way of stating Theorems \ref{symtheorem} and \ref{invtheorem} is to say that the group $\sym$ and the inverse semigroup $\inv$ have Sierpi\'nski rank $2$.

Note that the Sierpi\'nski rank of a given object now depends on the type of algebra we choose to view it as. For instance, the Sierpi\'nski rank of an inverse semigroup $S$ may not be the same as the Sierpi\'nski rank
of $S$ seen as an ordinary semigroup that ``just happens to be'' an inverse semigroup. The difference is, of course, that the inverse semigroup generated by some elements of $S$ is the semigroup generated by those elements and their inverses. So the best we can say in general is that the Sierpi\'nski rank of the inverse semigroup $S$ is at most the Sierpi\'nski rank of the semigroup $S$ which, in turn, is at most twice the Sierpi\'nski rank of the inverse semigroup $S$.

There are no such difficulties between groups and inverse semigroups. The Sierpi\'nski rank of a non-trivial group $G$ is the same as the Sierpi\'nski rank of the inverse semigroup $G$. The trivial group has Sierpi\'nski rank $0$ as a group and Sierpi\'nski rank $1$ as an (inverse) semigroup.

Since $\sym$ and $\inv$ are also important and interesting examples in the context of ordinary semigroups, it is natural to ask what their Sierpi\'nski ranks are when seen as semigroups. In the case of $\sym$ the answer is already known. In \cite[Theorem 3.5]{galvin} Galvin showed that the two generators from Theorem \ref{symtheorem} may be taken to have orders $4$ and $53$. In particular, since they have finite orders, the semigroup generated by them is the same as the group generated by them. Hence, seen as a semigroup,  $\sym$ has Sierpi\'nski rank $2$, also.

The purpose of this short note is to prove that the Sierpi\'nski rank of the semigroup $\inv$ is also $2$.
In other words, to prove the following stronger version of Theorem \ref{invtheorem}.

\begin{thm}\label{main} Let $\Omega$ be an infinite set. Then every countable subset of $\inv$ is contained in a $2$-generated subsemigroup of $\inv$.
\end{thm}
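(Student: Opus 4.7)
The plan is, given a countable set $F = \{f_n : n \in \nat\} \subseteq \inv$, to construct $c, d \in \inv$ and, for each $n$, a positive word $w_n$ in two variables with $f_n = w_n(c,d)$. First I would normalise the presentation of $\Omega$. The union $\bigcup_n (\dom(f_n) \cup \operatorname{ran}(f_n))$ has cardinality at most $|\Omega|$, so after conjugating by a suitable bijection of $\Omega$ (which does not affect whether the set lies in a $2$-generated subsemigroup) I may assume every $f_n$ is supported on a fixed subset $A \subseteq \Omega$ with $|A| = |\Omega \setminus A| = |\Omega|$. Write the complement as a disjoint union $\Omega \setminus A = \bigsqcup_{n \geq 1} B_n$ with each $B_n$ in bijection with $A$, and fix bijections $\beta_n : A \to B_n$.

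The construction itself is a ``ladder'' shift-and-encode. Let $d \in \inv$ be the shift-up-the-ladder map: $d(x) = \beta_1(x)$ for $x \in A$ and $d(\beta_n(x)) = \beta_{n+1}(x)$ for $x \in A$, $n \geq 1$; then $d^n|_A$ is the natural bijection $A \to B_n$. Let $c \in \inv$ encode each $f_n$ on the rung $B_n$: set $c(\beta_n(x)) = f_n(x)$ whenever $n \geq 1$ and $x \in \dom(f_n)$. On $A$, the positive word $cd^n$ then acts exactly as $f_n$, and the task reduces to checking that $cd^n$ is undefined outside $A$, so that the equality $f_n = cd^n$ holds as elements of $\inv$.

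\textbf{Main obstacle.} Two technical points will be delicate. First, $c$ as described may fail to be injective, because the restrictions $c|_{B_n}$ all target $A$ and can collide on overlapping ranges of the $f_n$'s. The natural fix is to further partition $A = \bigsqcup_{n \geq 0} A_n$, redefine $c$ so that $c(B_n) \subseteq A_n$ (so collisions between different $n$ are impossible), and introduce one extra positive-composition step to reassemble the resulting pieces of the range back into $A$. Second, viewed as an element of $\inv$, $cd^n$ acts not only on $A$ but also on the auxiliary $B_k$ (for $k \geq 1$), so naively $cd^n \neq f_n$; the remedy is to tune the orbit structure of $d$ on the $B_k$ so that $d^n(B_k)$ lies outside $\dom(c)$ whenever $k \geq 1$, for instance by introducing ``dead-end'' pieces into the partition that $d$ eventually flushes things into. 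Balancing these three constraints simultaneously --- injectivity of $c$, correct action of $cd^n$ (or of the corrected word) on $A$, and vanishing action on $\Omega \setminus A$ --- inside a single pair $c, d$ is the main technical content of the proof and dictates the precise combinatorial shape of the partition and of the bijections $\beta_n$.
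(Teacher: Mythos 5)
Your plan is essentially Sierpi\'nski's ladder argument for $\trans$, and the two ``delicate technical points'' you flag are not technicalities: they are precisely the obstructions that make the theorem nontrivial for $\inv$, and neither of your proposed fixes can work as described. Before that, the normalisation step already fails: conjugation by an element of $\sym$ preserves $|\dom(f_n)|$ and $|\Omega\setminus\dom(f_n)|$, so if some $f_n$ has domain all of $\Omega$ (take $f_0=1_{\Omega}$) then no conjugate of it is supported on a set $A$ with $|\Omega\setminus A|=|\Omega|$. The correct version of this reduction is Lemma \ref{relative}: one writes $h=fag$ with $a\in\sym$ and with $f,g$ fixed ``squeeze and expand'' maps --- but then $f$ and $g$ are extra generators that must themselves be absorbed, and the objects left to encode are permutations, which is why the paper passes through Corollary \ref{four} and Galvin's theorem for $\sym$ rather than encoding the $f_n$ directly.

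Concretely: (a) if you force injectivity of $c$ by sending $B_n$ into pairwise disjoint sets $A_n\subseteq A$, then the reassembling map that carries each $A_n$ back onto $A$ is not injective, hence is not an element of $\inv$; no single additional positive factor can undo the separation uniformly in $n$. This collapsing move is exactly what is available in $\trans$ but forbidden in $\inv$. (b) The requirement that $d^n|_A$ be a bijection $A\to B_n$ for every $n$ forces $d(B_{n-1})=B_n$, whence $d^n(B_k)=B_{n+k}\subseteq\dom(c)$ for all $k\geq 1$; so it is impossible to simultaneously keep the ladder and arrange that $d^n(B_k)$ avoids $\dom(c)$ --- the ``dead-end pieces'' you invoke cannot coexist with the indexing property of $d$. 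The paper's proof resolves both problems by a different route: after reducing to four generators it only needs to encode ten permutations, all of which are taken to be involutions (via Lemma \ref{involutions}, and Lemma \ref{cancel} to recover an inverse from positive words); each involution is planted as a conjugate $i_n$ on a block $\Omega_n$, $13\leq n\leq 22$, and the words $f^{26}gf^{12+k}gf^{13-k}gf^{12}g^2$ are engineered so that every intermediate domain and range matches up, with $g^2$, $\pi$ and $\tau$ independent of the $i_n$ to avoid circularity. Some mechanism of this kind --- self-inverse building blocks plus exact domain bookkeeping --- is what your outline is missing.
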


\section{Proof of Theorem \ref{main}}

To prove Theorem \ref{main} we require a number of preliminary results. Throughout, we will write $(x)f$ or simply $xf$ for the image of the point $x$ under the function $f$ and compose functions from left to right.

\begin{lem} \label{relative} Let $f,g \in \inv$ such that $\Omega g = \Omega f^{-1}=\Omega$ and $|\Omega\setminus \Omega f|= |\Omega\setminus \Omega g^{-1}|=|\Omega|$. 
Then for every $h \in \inv$ there exists $a \in \sym$ such that $h=fag$.
\end{lem}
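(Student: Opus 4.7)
The hypotheses unpack as follows. The condition $\Omega f^{-1} = \Omega$ says $\dom f = \Omega$, so $f$ is totally defined; the condition $\Omega g = \Omega$ says $g$ is surjective; and $|\Omega \setminus \Omega f| = |\Omega \setminus \dom g| = |\Omega|$ gives us that both the ``unused target'' of $f$ and the ``unused source'' of $g$ have full cardinality. The plan is to construct $a$ directly: on the part of $\Omega$ where $h$ is defined we have no choice, while on the rest we use the two largeness hypotheses to patch $a$ into a permutation.

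Set $A = \dom h$. For any $x \in \Omega$, $x(fag) = ((xf)a)g$, so the equation $h = fag$ forces two things. First, for $x \in A$ we need $((xf)a)g = xh$; since $g$ restricts to a bijection $\dom g \to \Omega$, this is equivalent to $(xf)a = (xh)g^{-1}$. Second, for $x \in \Omega \setminus A$ we need $(xf)a \notin \dom g$ (so that $fag$ is undefined at $x$). Composing the bijections $x \mapsto xf$ and $x \mapsto (xh)g^{-1}$, the first requirement defines $a$ unambiguously on $Af$ as a bijection $Af \to (Ah)g^{-1}$, with $(Ah)g^{-1} \subseteq \dom g$.

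It remains to extend $a$ to a permutation of $\Omega$ respecting the second requirement. Split $\Omega$ into the three pieces $Af$, $\Omega f \setminus Af$, and $\Omega \setminus \Omega f$. On $\Omega f \setminus Af$ (the $f$-images of points outside $A$) we must send $a$ injectively into $\Omega \setminus \dom g$; since $|\Omega f \setminus Af| \leq |\Omega|$ and $|\Omega \setminus \dom g| = |\Omega|$, we can choose the injection so that its image misses a subset of $\Omega \setminus \dom g$ of cardinality $|\Omega|$. After these two partial definitions, $a$ is a bijection from $\Omega f$ onto a subset of $\Omega$ whose complement still has cardinality $|\Omega|$ (since that complement contains the full-cardinality leftover inside $\Omega \setminus \dom g$). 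Finally, since $|\Omega \setminus \Omega f| = |\Omega|$ by hypothesis, choose any bijection from $\Omega \setminus \Omega f$ onto this leftover complement to complete $a$ to an element of $\sym$.

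A short verification then confirms $h = fag$: for $x \in A$ we have $x(fag) = ((xh)g^{-1})g = xh$; for $x \in \Omega \setminus A$, $(xf)a$ lies outside $\dom g$ by construction so $x(fag)$ is undefined, matching $h$. The only delicate point — and really the only place the hypotheses are used — is step three, where the two cardinality assumptions are exactly what one needs to accommodate $\Omega f \setminus Af$ inside $\Omega \setminus \dom g$ with enough room to spare, and then to bijectively match the remaining source $\Omega \setminus \Omega f$ with the remaining target.
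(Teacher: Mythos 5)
Your proof is correct, and at its core it is the same argument as the paper's: $a$ is forced to agree with $f^{-1}hg^{-1}$ on $(\dom h)f$, and the two cardinality hypotheses let you complete this partial bijection to a permutation. The difference lies in how carefully the completion is carried out, and here your version is actually the better one. The paper extends $f^{-1}hg^{-1}$ (with domain $\Omega h^{-1}f$ and range $\Omega hg^{-1}$) by an \emph{arbitrary} bijection between the complements of its domain and range and then asserts $fag=f(f^{-1}hg^{-1})g$; but for a careless choice of extension a point of $\Omega f\setminus(\dom h)f$ may land in $\Omega g^{-1}\setminus\Omega hg^{-1}$, which is still inside $\dom g$, so that $fag$ is defined at a point where $h$ is not. (Take $\Omega=\nat$, $h$ the empty map, $(n)f=2n$, and $(2n)g=n$ with $\dom g$ the even numbers: then $f^{-1}hg^{-1}$ is the empty map, the extension $a=1_{\Omega}$ is permitted by the paper's wording, and $fag=1_{\Omega}\neq h$.) Your second requirement --- that $(xf)a\notin\dom g$ for $x\notin\dom h$ --- is exactly the condition the paper leaves implicit, and your three-piece decomposition of the source into $(\dom h)f$, $\Omega f\setminus(\dom h)f$ and $\Omega\setminus\Omega f$, with the middle piece injected into $\Omega\setminus\dom g$ so as to leave a leftover of size $|\Omega|$, handles it correctly; the cardinality bookkeeping at each stage checks out. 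So your proof not only establishes the lemma but repairs a small gap in the published argument.
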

\begin{proof} The map $f^{-1}hg^{-1}:\Omega h^{-1}f \longrightarrow \Omega h g^{-1}$ is a bijection. 
Since 
$$|\Omega| \geq |\Omega\setminus \Omega h^{-1} f|\geq |\Omega\setminus \Omega f|=|\Omega|= |\Omega\setminus \Omega g^{-1}|\leq |\Omega\setminus \Omega h g^{-1}|\leq |\Omega|,$$
 we may extend $f^{-1}hg^{-1}$ to $a\in \sym$. Then $fag=f(f^{-1}hg^{-1})g=h$, as required.
\end{proof}


As mentioned earlier, the next result is an immediate consequence of Theorem \ref{invtheorem}. Alternatively, as shown here, it is also a corollary of Theorem \ref{symtheorem}.

\begin{cor}\label{four}
Let $\Omega$ be an infinite set. Then every countable subset of $\inv$ is contained in a $4$-generated subsemigroup of $\inv$.
\end{cor}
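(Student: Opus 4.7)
The plan is to reduce the problem to the symmetric-group setting via Lemma \ref{relative} and then apply the strengthened form of Galvin's theorem noted in the introduction. Given a countable subset $\{h_1,h_2,\ldots\}\subseteq\inv$, the goal is to exhibit four elements of $\inv$ whose subsemigroup contains every $h_i$.

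First, I would construct $f,g\in\inv$ satisfying the hypotheses of Lemma \ref{relative}. Concretely, partition $\Omega = A\sqcup B$ with $|A|=|B|=|\Omega|$, and take $f\colon\Omega\to A$ and $g\colon A\to\Omega$ to be any bijections. Then $\Omega f^{-1}=\dom f = \Omega$, $\Omega g = \Omega$, and both $\Omega\setminus\Omega f$ and $\Omega\setminus\Omega g^{-1}$ equal $B$, which has cardinality $|\Omega|$. Lemma \ref{relative} therefore provides, for every $i$, an element $a_i\in\sym$ with $h_i=fa_ig$. Next, apply the strengthened version of Theorem \ref{symtheorem} (the refinement of Galvin mentioned in the introduction, producing two generators of finite orders $4$ and $53$) to the countable set $\{a_1,a_2,\ldots\}\subseteq\sym$: there exist $b,c\in\sym$ of finite order with $\{a_1,a_2,\ldots\}\subseteq\langle b,c\rangle$.

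The one place where care is needed is bridging between group generation and semigroup generation: Galvin's theorem produces a subgroup of $\sym$, while the corollary asks for a subsemigroup of $\inv$, so a priori one might need $b^{-1}$ and $c^{-1}$ among the generators, giving six rather than four. This is exactly where the finite-order refinement is essential: since $b^{-1}$ is a positive power of $b$ and $c^{-1}$ a positive power of $c$, the subgroup $\langle b,c\rangle$ coincides with the subsemigroup of $\sym$ generated by $\{b,c\}$, and each $a_i$ is already a product of positive powers of $b$ and $c$. Consequently, every $h_i=fa_ig$ lies in the subsemigroup of $\inv$ generated by the four-element set $\{f,g,b,c\}$, completing the proof.
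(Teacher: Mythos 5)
Your proposal is correct and follows essentially the same route as the paper: reduce to a countable subset of $\sym$ via Lemma \ref{relative} with suitable $f,g$, then invoke the fact that $\sym$ has Sierpi\'nski rank $2$ as a semigroup (Galvin's finite-order generators). You simply make explicit two details the paper leaves implicit, namely a concrete construction of $f$ and $g$ and the reason the finite orders let one pass from group generation to semigroup generation.
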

\begin{proof}
Let $A$ be an arbitrary countable subset of $\inv$. Let $f,g \in \inv$ satisfy the conditions of Lemma \ref{relative}. Then, by Lemma \ref{relative}, there exists a countable subset $B$ of $\sym$ such that $A\subseteq \genset{f,g,B}$. Since $\sym$ as a semigroup has Sierpi\'nski rank $2$, there exist $h,k\in \sym$ such that $B\subseteq \genset{h,k}$. Thus $A\subseteq \genset{f,g,B}\subseteq \genset{f,g, h,k}$, as required.
\end{proof}

Recall that an element $i$ of $\sym$ is called an \emph{involution} if $i^2$ equals the identity $1_{\Omega}$ on $\Omega$. The following is a well-known result, see, for example, \cite[Lemma 2.2]{galvin}.

\begin{lem}\label{involutions}
Every element of $\sym$ is a product of two involutions.
\end{lem}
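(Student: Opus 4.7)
The plan is to analyze an element $\sigma \in \sym$ orbit by orbit. Since $\sigma$ is a bijection of $\Omega$, each orbit of the cyclic group $\genset{\sigma}$ is either a finite cycle or a bi-infinite orbit indexed by $\integers$ on which $\sigma$ acts as the shift (one-sided infinite orbits cannot occur). It therefore suffices to construct, on each orbit, two involutions whose product, under the left-to-right composition convention of the paper, agrees with $\sigma$ on that orbit, and then to take the disjoint unions of these pieces across all orbits.

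For a finite cycle, label the points as $a_0,a_1,\ldots,a_{n-1}$ with $(a_k)\sigma = a_{k+1 \bmod n}$, and define the two reflections
\[
(a_k)i = a_{-k \bmod n},\qquad (a_k)j = a_{(1-k)\bmod n}.
\]
Both are involutions that fix the orbit setwise, and
\[
(a_k)ij = (a_{-k \bmod n})j = a_{(1+k)\bmod n} = (a_k)\sigma.
\]
For a bi-infinite orbit $\{a_k : k\in\integers\}$ with $(a_k)\sigma=a_{k+1}$, the same formulas $(a_k)i=a_{-k}$ and $(a_k)j=a_{1-k}$ give involutions on that orbit whose product is the shift. (The small cases fit in automatically: a fixed point contributes $i=j=1_\Omega$ on that point, and a $2$-cycle contributes $i=1_\Omega$ and $j$ the transposition.)

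Finally, define $i,j\in\sym$ by letting them act on each orbit of $\sigma$ by the corresponding formula above. Since every orbit is preserved setwise by both $i$ and $j$, these piece together to give well-defined bijections of $\Omega$; by the per-orbit verification they satisfy $i^2=j^2=1_\Omega$ and $ij=\sigma$. There is no substantive obstacle here beyond keeping track of the cycle decomposition and respecting the left-to-right composition convention; the content is entirely the elementary observation that a cyclic permutation of any length, and the shift on $\integers$, can each be realised as a product of two reflections.
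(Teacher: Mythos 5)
Your proof is correct; the paper itself does not prove this lemma but cites it as well known (Galvin, Lemma 2.2), and your reflection construction $(a_k)i=a_{-k}$, $(a_k)j=a_{1-k}$ on each orbit is exactly the standard argument, matching the analogous construction the paper uses in its proof of Lemma \ref{cancel}.
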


We will also require the following result, the proof of which is similar to that of Lemma \ref{involutions}.

\begin{lem}\label{cancel} For every $a\in \sym$ there exists an involution $j \in \sym$ such that $a^{-1} \in \genset{a, aj}$.
\end{lem}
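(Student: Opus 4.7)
The plan is to reduce the lemma to producing an involution $j \in \sym$ that inverts $a$ by conjugation, i.e., $jaj = a^{-1}$, and then to observe that $a^{-1}$ is a short positive word in $a$ and $aj$.

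First, I would construct such a $j$ cycle by cycle on $a$, mirroring the proof of Lemma \ref{involutions}. For each infinite cycle $(\ldots, x_{-1}, x_0, x_1, \ldots)$ of $a$ (with $a(x_n)=x_{n+1}$), define $j$ on this cycle by $j(x_n) = x_{-n}$; this is an involution and one computes $jaj(x_n) = j(x_{-n+1}) = x_{n-1} = a^{-1}(x_n)$. For each finite $n$-cycle $(x_0\; x_1\; \ldots\; x_{n-1})$, set $j(x_k) = x_{-k \bmod n}$, which has the same property. Extend $j$ by the identity on the set of fixed points of $a$. Gluing these pieces together yields an involution $j \in \sym$ with $jaj = a^{-1}$.

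Next, I would verify directly that $a^{-1} = (aj) \cdot a \cdot (aj)$. From $jaj = a^{-1}$, left-multiplying by $a$ gives $ajaj = 1$, so $aj$ is itself an involution and in particular $aj = (aj)^{-1} = ja^{-1}$. Hence $(aj)\cdot a = ja^{-1}\cdot a = j$, and therefore
\[
(aj)\cdot a\cdot (aj) \;=\; j\cdot (aj) \;=\; j\cdot ja^{-1} \;=\; a^{-1}.
\]
This exhibits $a^{-1}$ as a positive word in $a$ and $aj$, giving $a^{-1} \in \genset{a, aj}$.

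The only genuine work lies in the cycle-by-cycle construction of $j$, which is completely parallel to the standard proof of Lemma \ref{involutions}; the algebraic extraction of $a^{-1}$ from $a$ and $aj$ afterwards is a two-line identity, so I do not anticipate any real obstacle.
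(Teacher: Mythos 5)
Your proof is correct and follows essentially the same route as the paper: an involution is built cycle by cycle so that the identity $(aj)\,a\,(aj)=a^{-1}$ exhibits $a^{-1}$ as a positive word in $a$ and $aj$. The only (immaterial) difference is normalization: you choose $j$ with $jaj=a^{-1}$ and deduce that $aj$ is an involution, while the paper defines $j$ shifted by one step along each cycle so that $aj$ is the ``negation'' map directly; the two choices just swap the roles of $j$ and $aj$.
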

\begin{proof}
Let $\sigma$ be any cycle of $a$ and fix an arbitrary $x$ in the orbit of $\sigma$. 
Define the transformation $j_{\sigma}$ of the orbit $\set{x \sigma^{n}}{n\in \mathbb{Z}}$ of $\sigma$ by 
$(x \sigma^{n})j_{\sigma}=x \sigma^{-n+1}$
for all $n\in \mathbb{Z} $. 
Note that 
$(x \sigma^{-n+1})j_{\sigma}=x \sigma^{-(-n+1)+1}=x \sigma^{n}$
and so $j_{\sigma}$ is an involution on the orbit of $\sigma$.

Furthermore, $(x \sigma^{n})\sigma j_{\sigma}=(x \sigma^{n+1})j_{\sigma}=x \sigma^{-n}$
and so 
$$(x\sigma^n)\sigma j_{\sigma} \sigma \sigma j_{\sigma}=(x\sigma^{-n})\sigma \sigma j_{\sigma}=(x\sigma^{-n+1})\sigma j_{\sigma}=x \sigma^{n-1}.$$
Thus $(\sigma j_{\sigma})\sigma(\sigma j_{\sigma})=\sigma^{-1}$. 

In the same way as above, define $j_{\tau}$ for every cycle $\tau$ of $a$ and let $j$ be the union of all $j_{\tau}$. Then $j\in \sym$ is an involution and 
$(aj)a(aj)=a^{-1}$. In particular, $a^{-1} \in \genset{a, aj}$, as required.
\end{proof}

We are now in a position to prove the main theorem.

\vspace{\baselineskip}
\proofref{main}
By Corollary \ref{four}, it suffices to show that for all $h_1, h_2, h_3,h_4\in \inv$ there exist 
$f,g\in \inv$ such that $h_1,h_2,h_3,h_4 \in \genset{f,g}$.
Partition $\Omega$ into countably infinitely many sets $\Omega_0, \Omega_1, \Omega_2\dots$ where $|\Omega_i|=|\Omega|$ for every $i\in \nat$. 
Let $f$ be any element of $\inv$ that maps $\Omega_i$ bijectively to $\Omega_{i+1}$ for every $i\in \nat$.
Note that $|\Omega\setminus \Omega f|=|\Omega_0|=|\Omega|$ and $\Omega f^{-1}=\Omega$. 

For $13\leq n\leq 22$, let $i_n\in \operatorname{Sym}(\Omega_n)$ be an involution and let $g$ be any element of $\inv$ with domain $\bigcup_{n=13}^{\infty}\Omega_n$ such that:
\begin{itemize}

\item $g|_{\Omega_n}=i_n$ for $13\leq n\leq 22$;

\vspace{1mm}
\item $(\Omega_{23})g=\Omega_{23}\cup \Omega_{24}$;

\vspace{1mm}
\item $(\Omega_{24})g=\bigcup_{n=25}^{\infty} \Omega_n$;

\vspace{1mm}
\item $(\Omega_{25})g=\bigcup_{n=1}^{12}\Omega_n$;

\vspace{1mm}
\item $(\bigcup_{n=26}^{\infty})g=\Omega_0$.
\end{itemize}

The aim is now to specify the involutions $i_n$ in such a way that $h_1,h_2,h_3,h_4 \in \genset{f,g}$. The definition of $i_n$ will depend on $h_1,h_2,h_3,h_4, f$ and $g$. Since $g$, in turn, depends on the $i_n$, we must be very careful to avoid circular definitions.

Note that $g^2$ is independent of the choices for the $i_n$ (as long as every $i_n$ is indeed an involution). Note that the domain of $g^2$ is $(\Omega) g^{-2}=(\bigcup_{n=13}^{\infty})g^{-1}=\bigcup_{n=13}^{24}\Omega_n$ and the range is $\Omega g^2=\Omega g=\Omega$.
Let $\pi=f^{26}g$ and $\tau = g^{-2}f^{-12}g^{-1}f^{-25}$. 
It is easy to verify that $\pi$ and $\tau$ are bijections from $\Omega$ to $\Omega_0$. Furthermore, $\pi$ is independent of the choices for the $i_n$, since $\Omega f^{26}=\bigcup_{n=26}^{\infty}$ has empty intersection with the union $\bigcup_{n=13}^{22}\Omega_n$ of the domains of the $i_n$. 
Similarly, $\tau$ is independent of the choices for $i_n$, since $g^2$, and hence $g^{-2}$, are independent and 
$\Omega g^{-2}f^{-12}=(\bigcup_{n=13}^{24}\Omega_n) f^{-12} =\bigcup_{n=1}^{12}\Omega_n$ has empty intersection with the union $\bigcup_{n=13}^{22}\Omega_n$ of the domains of the $i_n^{-1}$.
In particular, we may, without fear of our argument becoming circular, use $g^2$, $\pi$ and $\tau$ when defining $i_n$.

Since $f$ and $g^2$ satisfy the conditions of Lemma \ref{relative}, there exist $a_1,a_2,a_3,a_4 \in \sym$ such that $h_1,h_2,h_3,h_4 \in \genset{f,g^2, a_1, a_2, a_3, a_4}$. By Lemma \ref{involutions}, there exist involutions $j_1, \dots, j_8\in \sym$ such that 
$a_1,a_2,a_3,a_4 \in \genset{j_1, \dots, j_8}$. Then $h_1,h_2,h_3,h_4 \in \genset{f,g^2, j_1, \dots, j_8}$. Since $\pi$ and $\tau$ are both bijections from $\Omega$ to $\Omega_0$, the composite $\pi \tau^{-1}$ is an element of $\sym$. 
Hence, by Lemma \ref{cancel}, there exists an involution $j_9\in \sym$ such that $(\pi\tau^{-1})^{-1}\in \genset{(\pi\tau^{-1}), (\pi\tau^{-1}) j_9}$. Let $j_{10}$ be the identity $\Omega$. 

Note that $\tau f^{n}$ is a bijection from $\Omega$ to $\Omega_n$ and define 
$$i_n=(\tau f^{n})^{-1} j_{n-12} (\tau f^n)=(f^{-n}\tau^{-1}j_{n-12} \tau f^{n})|_{\Omega_n}$$ 
for $13\leq n \leq 22$. 
Then $i_n$ is an involution since it is the conjugate of the involution $j_{n-12}$.
Furthermore, if $x\in \Omega$ is arbitrary, and $1\leq k \leq 10$, then
$(x) f^{26}g f^{12+k}= (x) \pi f^{12+k}\in \Omega_{12+k}.$ 
Hence 
\begin{eqnarray*}
(x) f^{26}g f^{12+k} g f^{13-k} g f^{12}g^{2}
&=&(x) \pi f^{12+k}(f^{-12-n}\tau^{-1}j_{k} \tau f^{12+k})f^{13-k} g f^{12}g^{2}\\
&=&(x) \pi \tau^{-1} j_{k}\tau f^{25} g f^{12}g^{2}\\
&=&(x) \pi \tau^{-1} j_{k}\tau \tau^{-1}\\
&=&(x) \pi \tau^{-1} j_k.
\end{eqnarray*}
Thus $f^{26}g f^{12+k} g f^{13-k} g f^{12}g^{2}=(\pi\tau^{-1}) j_k$. In particular, $(\pi\tau^{-1}) j_k \in \genset{f,g}$ for $1\leq k\leq 10$. But $j_9$ was chosen such that 
$(\pi\tau^{-1})^{-1}\in \genset{(\pi\tau^{-1}), (\pi\tau^{-1}) j_9}$ and $(\pi\tau^{-1}) =(\pi\tau^{-1}) j_{10}$. Hence $(\pi\tau^{-1})\in \genset{f,g}$. 
It follows that $j_1, \dots, j_8 \in \genset{f,g}$.
Thus 
$$h_1,h_2,h_3,h_4\in \genset{f,g^2, j_1, \dots, j_8}\subseteq \genset{f,g},$$
as required.
\qed

\end{document}